\def\R{\mathbb{R}}
\def\N{\mathbb{N}}
\def\C{\mathbb{C}}
\def\scrL{\mathscr{L}}
\newcommand{\e}{\mathrm{e}}
\newcommand{\Span}{\operatorname{span}}
\newcommand{\vertiii}[1]{{\left\vert\kern-0.25ex\left\vert\kern-0.25ex\left\vert #1 
    \right\vert\kern-0.25ex\right\vert\kern-0.25ex\right\vert}}
\newcommand{\ba}{\begin{aligned}}
\newcommand{\ea}{\end{aligned}}
\newcommand{\be}{\begin{equation}}
\newcommand{\ee}{\end{equation}}
\newtheorem{theorem}{Theorem}%[section]
\newtheorem{corollary}[theorem]{Corollary}
\newtheorem{lemma}[theorem]{Lemma}
\newtheorem{proposition}[theorem]{Proposition}
\theoremstyle{definition}
\begin{document}

\title{Spectral description of a cell growth and division equation}

\author[P. Gabriel]{Pierre Gabriel}
\address[P. Gabriel]{Institut Denis Poisson, Université de Tours, Université d’Orléans, CNRS, Tours, France}
\email{pierre.gabriel@univ-tours.fr}
\author[B. van Brunt]{Bruce van Brunt}
\address[B. van Brunt]{School of Mathematical and Computational Sciences, Massey University, Auckland, New Zealand}
\email{B.vanBrunt@massey.ac.nz}
\author[G. C. Wake]{Graeme Charles Wake}
\address[G. C. Wake]{School of Mathematical and Computational Sciences, Massey University, Auckland, New Zealand}
\email{G.C.Wake@massey.ac.nz}
\author[A. A. Zaidi]{Ali Ashher Zaidi}
\address[A. A. Zaidi]{Department of Mathematics, Lahore University of Management Sciences, Lahore, Pakistan}
\email{ali.zaidi@lums.edu.pk}

\begin{abstract}
We give a refined description of the dominant spectrum of a non-local operator that models growth and equal mitosis of cells.
More precisely we look at the spectrum in half planes at the right hand side of the first accumulation point of eigenvalues
and give criteria on the weight of weighted $L^1$ spaces for this spectrum to be made of explicit simple eigenvalues.
The method relies on a high order long time asymptotic expansion of the solutions to the associated evolution equation obtained in [Zaidi, van Brunt, Wake, Proc. A, R. Soc. Lond., 2015]
combined with a Weyl theorem taken from [Mischler, Scher, Ann. Inst. Henri Poincaré, Anal. Non Linéaire, 2016].
\end{abstract}

\keywords{Growth-fragmentation, dominant spectrum, long-time asymptotic expansion, positive semigroups, Weyl theorem.}

\subjclass[2020]{35B40, 35C20, 35P05, 35Q92, 47D06}

\maketitle

%%%%%%%%%%%%%%%%%%%%%%%%%%%%%%%%%%%%%%%%%%%%%%%%%%%%%%%%%%%%%%%%%%%%%%%%%%%%%%%%%%%%%%%%%%%%%%%%%%%%%%%%%

 \section{Introduction}
 
We are interested in the following non-local partial differential equation
\begin{equation}\label{eq:mitosis}
\partial_tu(t,x)+g\,\partial_xu(t,x)+b\,u(t,x)=4b\,u(t,2x)
\end{equation}
set for $x>0$ and complemented with zero flux boundary condition $u(t,0)=0$.
This equation appears as a population model of cell growth and division,
see~\cite{Bell1968,Bell1967,Sinko1967} for historical references but also the books~\cite{Banasiak2020,MetzDiekmann,Perthame}.
Within this modelling, $x$ is the size (volume, mass, molecular content...) of the cells, which grows with a constant speed $g>0$ and divide into two half sized daughter cells with a constant rate $b>0$,
%both $g$ and $b$ being independent of the size,
and $u(t,x)$ represents the size distribution of the cells in the population at time $t$.

\

Integrating Equation~\eqref{eq:mitosis} in $x$, we get that
\[N(t):=\int_0^\infty u(t,x)dx\]
the total number of cells in the population, verifies the simple differential equation $N'=bN$.
It is thus explicitly given by
\begin{equation}\label{eq:N}
N(t)=N_0\e^{bt},\quad N_0=\int_0^\infty u_0(x)dx,\quad u_0(x)=u(0,x).
\end{equation}
A natural question is then the existence of solutions with steady size distribution, or more precisely the existence of a function $f_0:(0,\infty)\to[0,\infty)$, $f_0\not\equiv0$, such that
\[(t,x)\mapsto \e^{bt}f_0(x)\]
satisfies~\eqref{eq:mitosis}.
Such a steady size distribution $f_0$ was discovered in~\cite{HallWake89}.
It is given explicitly as a series, which already appeared in~\cite{KatoMcLeod} in a different context and was also recovered more recently in~\cite{PerthameRyzhik}.

The function $f_0$ can also be seen as an eigenfunction of the functional differential operator $\scrL$ defined by
\[\scrL f(x)=-g f'(x)-bf(x)+4bf(2x)\]
associated to the eigenvalue $b$, namely
\[\scrL f_0=bf_0.\]
Higher eigenfunctions were exhibited in~\cite{vBVH1,vBVH2} in the form of Dirichlet series,
associated to the eigenvalues
 \[\lambda_m=(2^{1-m}-1)b\]
 with $m$ nonnegative integers.
Denoting by $(f_m)_{m\geq0}$ this family, we thus have
\[\scrL f_m=\lambda_m f_m.\]

A challenging further issue is to identify the whole spectrum of $\scrL$ in suitable Banach spaces.
This question is strongly related to the long-time asymptotic behaviour of the solutions to~\eqref{eq:mitosis}.
In~\cite{Zaidi2015} it was proved that the eigenfunctions $(f_m)$ provide the pointwise asymptotic expansion of the solutions to~\eqref{eq:mitosis} when $t\to+\infty$.
More precisely, under continuity assumption on the initial distribution $u_0$, the solution satisfies, for any $x>0$ and $M\in\N$,
\begin{equation}\label{eq:expansion}
u(t,x)=\sum_{m=0}^M \alpha_m \e^{\lambda_mt}f_m(x)+O\big(e^{\lambda_{M+1}t}\big)\qquad\text{as}\ t\to+\infty
\end{equation}
for some real numbers $\alpha_m$ that depend only on $u_0$.
In the present paper, we aim at strengthening this result into a (uniform) convergence in norm in some weighted $L^1$ spaces,
and inferring a description of the dominant spectrum of $\scrL$ (with zero flux boundary condition) in these spaces.

\

We work in the spaces $L^1_k=L^1((0,\infty),\varpi_k(x)dx)$, where $\varpi_k(x)=(1+x)^k$ with $k\in\R$.
The dual space of $L^1_k$ is
\[(L^1_k)'=L^\infty_k:=\big\{\phi\in L^\infty_{loc}(0,\infty),\ \phi\varpi_{-k}\in L^\infty(0,\infty)\big\},\]
endowed with the norm $\|\phi\|_{L^\infty_k}=\|\phi\varpi_{-k}\|_{L^\infty}$.
All the eigenfunctions $f_m$ belong to $L^1_\infty=\bigcap_{k\geq0}L^1_k$
and we have the following result.

 \begin{theorem}\label{thm:main}
Let $a\in(-b,b)$. For any $k>\max(1,k_a)$ where
\[k_a=\frac{\log(2b)-\log(b+a)}{\log2},\]
we have for all $u_0\in L^1_k$ and all $t\geq0$
\begin{equation}\label{eq:higher-order}
\bigg\| u(t,\cdot)-\sum_{m=0}^{m_a}\langle \phi_m,u_0\rangle\,\e^{\lambda_m t}f_m\bigg\|_{L^1_k}\leq C\e^{a t}\bigg\| u_0-\sum_{m=0}^{m_a}\langle \phi_m,u_0\rangle f_m\bigg\|_{L^1_k}
\end{equation}
for some dual eigenfunctions $(\phi_m)\in L^\infty_k$,
where $m_a=\lceil k_a-1\rceil$ is the integer such that $\lambda_{m_a+1}\leq a<\lambda_{m_a}$.
 \end{theorem}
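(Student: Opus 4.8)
The plan is to deduce Theorem~\ref{thm:main} from an abstract Weyl-type spectral mapping/perturbation theorem applied to the semigroup generated by $\scrL$ on $L^1_k$, using the pointwise expansion~\eqref{eq:expansion} of~\cite{Zaidi2015} as the key input that pins down the location and simplicity of the dominant eigenvalues. Concretely, I would first set up the functional-analytic framework: show that $\scrL$ with zero-flux boundary condition generates a positive $C_0$-semigroup $(S_t)_{t\geq0}$ on each $L^1_k$, $k\geq0$, via a perturbation argument (the transport part $-g\partial_x$ with inflow zero generates a nilpotent-type shift semigroup, and $f\mapsto 4bf(2\cdot)$ is a bounded operator on $L^1_k$ as soon as the weight grows at most polynomially, with operator norm $2^{1-k}\cdot(\text{something})$ — this is where the threshold $k_a$ will come from). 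The growth bound of the ``rescaled'' transport-plus-death part is $-b$ (or more precisely controlled by $2^{1-k}b - b$ after accounting for the fragmentation gain on the weighted space), so that the essential growth bound of $S_t$ on $L^1_k$ is $\leq \max(-b,\ (2^{1-k}-1)b) = \lambda_{k}$-type quantity; choosing $k>k_a$ pushes this essential bound strictly below $a$.

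Next I would invoke the Weyl theorem of Mischler--Scher to conclude that, in the half-plane $\{\Re z > a\}$, the spectrum of the generator consists of finitely many eigenvalues of finite algebraic multiplicity, and that the semigroup splits accordingly. It remains to identify these eigenvalues as exactly $\lambda_0,\dots,\lambda_{m_a}$, each simple, with eigenfunctions $f_m$ and corresponding dual eigenfunctions $\phi_m$. For this I would use~\eqref{eq:expansion}: since every continuous compactly supported $u_0$ has a solution with the stated pointwise asymptotic expansion, the projections onto the generalized eigenspaces in $\{\Re z>a\}$ must be spanned by the $f_m$ with $\lambda_m>a$, which forces (a) no other eigenvalue in that half-plane, (b) no Jordan blocks (a nontrivial Jordan block at $\lambda_m$ would produce a $t\e^{\lambda_m t}$ term contradicting~\eqref{eq:expansion}), and (c) that the spectral projection is $u_0\mapsto \sum_{m\le m_a}\langle\phi_m,u_0\rangle f_m$ for suitable $\phi_m\in L^\infty_k$ (the dual eigenfunctions, obtained as eigenfunctions of $\scrL^*$, which one checks lie in $L^\infty_k$). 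A density argument extends the identification of the projection from nice $u_0$ to all of $L^1_k$. Finally, applying the semigroup decay estimate on the complementary subspace (where the growth bound is $<a$, by the essential spectral radius bound and finiteness of point spectrum to the right) to the vector $u_0-\sum_{m\le m_a}\langle\phi_m,u_0\rangle f_m$ yields~\eqref{eq:higher-order} with the constant $C$ from the spectral splitting; one uses $k>1$ to ensure $L^1_k\hookrightarrow L^1$ so that mass is finite and the $m=0$ term ($\lambda_0=b$) is genuinely present and dominant.

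The main obstacle I anticipate is the precise computation of the \emph{essential} growth bound of $S_t$ on $L^1_k$ and matching it to the threshold $k_a$. The naive estimate bounds the fragmentation operator's norm, but to get the sharp constant $2^{1-m}b$ one must either (i) use a self-similar change of variables $y=\log x$ turning the equation into a constant-coefficient problem where the fragmentation term becomes a convolution-type shift, making the essential spectrum computable via Fourier/Laplace symbols, or (ii) iterate the Duhamel formula and show that after finitely many fragmentation events the resulting operator is compact (or power-compact) modulo an exponentially small remainder with rate $2^{1-k}b-b$, so that Voigt's/Weis's characterization of the essential spectrum of perturbed semigroups applies. Getting the boundary condition $u(t,0)=0$ to interact correctly with this splitting — in particular checking that the transport semigroup with zero inflow has the right decay on the weighted space and that the doubling map $x\mapsto 2x$ respects the boundary behaviour — is the delicate technical point. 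A secondary obstacle is verifying that the dual eigenfunctions $\phi_m$ constructed abstractly coincide with the concrete functionals appearing in~\eqref{eq:expansion} (i.e.\ that $\alpha_m = \langle\phi_m,u_0\rangle$), which should follow from uniqueness of the spectral projection but requires care with the duality pairing on the weighted spaces.
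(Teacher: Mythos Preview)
Your proposal is correct and follows essentially the same route as the paper: invoke the Mischler--Scher Weyl decomposition of $(S_t)$ on $L^1_k$, use the pointwise expansion~\eqref{eq:expansion} to force the finite spectral part in $\overline{\Delta_a}$ to be exactly $\{\lambda_0,\dots,\lambda_{m_a}\}$ with $j_m=0$ and rank-one projectors $\Pi_m=\psi_m\otimes f_m$, then identify $\psi_m=\phi_m$ and apply the estimate to $u_0-\sum_{m\le m_a}\langle\phi_m,u_0\rangle f_m$. Two minor points: your anticipated ``main obstacle'' (the sharp essential-type growth bound $(2^{1-k}-1)b$, equivalently the threshold $k>k_a$) is not reproved in the paper but quoted directly from~\cite[Proposition~4.4]{Mischler2016}, and the identification $\psi_m=\phi_m$ is carried out by testing the semigroup decomposition against the explicit polynomial dual eigenfunctions of Section~\ref{sec:eigenfunctions} (via the biorthogonality $\langle\phi_n,f_m\rangle=\delta_{n,m}$) rather than by matching with the coefficients $\alpha_m$ of~\eqref{eq:expansion}.
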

 
 This theorem has implications on the spectrum of $\scrL$ seen as an unbounded operator in $L^1_k$ with domain
 \[D_k(\scrL)=\big\{f\in L^1_k\,|\ f'\in L^1_k\ \text{and}\ f(0)=0\big\}.\]
Recall that the spectrum of $\scrL$ in $L^1_k$ is defined as
\[\sigma_k(\scrL)=\big\{\lambda\in\C\,|\ \lambda-\scrL:D_k(\scrL)\to L^1_k\ \text{is not bijective}\big\}.\]
Since $f_m\in L^1_\infty$, we have that $(\lambda_m)_{m\geq0}\subset\sigma_k(\scrL)$ for any $k\in\R$,
but the rest of the spectrum may depend on the value of $k$.
For $a\in\R$ we define
\[\Delta_a:=\{z\in\C\,|\ \mathrm{Re}(z)>a\}.\]
The following result is a classical consequence of Theorem~\ref{thm:main}.

 \begin{corollary}
 For any $a\in(-b,b)$ and $k>\max(1,k_a)$ we have
 \[\sigma_k(\scrL)\cap\Delta_a=\{\lambda_0,\cdots,\lambda_{m_a}\}.\]
 Besides, all the eigenvalues $\lambda_m$ are algebraically simple, in the sense that
 \[N((\lambda_m-\scrL)^j)=N(\lambda_m-\scrL)=\Span(f_m)\]
 for all integer $j$.
 \end{corollary}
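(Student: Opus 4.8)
The plan is to derive the Corollary from Theorem~\ref{thm:main} by a standard spectral-projection argument. First I would observe that the right-hand side of \eqref{eq:higher-order} defines, for fixed $a\in(-b,b)$ and $k>\max(1,k_a)$, a uniformly bounded family of operators: setting $\Pi f=\sum_{m=0}^{m_a}\langle\phi_m,f\rangle f_m$, the inequality reads $\|(S(t)-S(t)\Pi)u_0\|_{L^1_k}\le C\e^{at}\|(I-\Pi)u_0\|_{L^1_k}$, where $(S(t))_{t\ge0}$ is the semigroup generated by $\scrL$ on $L^1_k$ (whose generation must be recalled or cited). Since $\scrL f_m=\lambda_m f_m$ and the $f_m$ are linearly independent in $L^1_\infty\subset L^1_k$, and since the $\phi_m$ are dual eigenfunctions with $\langle\phi_m,f_n\rangle=\delta_{mn}$ (which I would verify, or arrange by a triangular change of basis among the finitely many $\phi_m$), $\Pi$ is a bounded projection of rank $m_a+1$ commuting with $S(t)$ and with $\scrL$, whose range is $\Span(f_0,\dots,f_{m_a})$.

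Next I would show $\sigma_k(\scrL)\cap\Delta_a=\{\lambda_0,\dots,\lambda_{m_a}\}$. On the range of $\Pi$ the operator $\scrL$ acts as the diagonal matrix with entries $\lambda_0,\dots,\lambda_{m_a}$, all of which lie in $\Delta_a$; so these points belong to $\sigma_k(\scrL)\cap\Delta_a$, as already noted in the excerpt. Conversely, on $R(I-\Pi)$ the semigroup satisfies $\|S(t)(I-\Pi)\|\le C\e^{at}$, so its growth bound there is $\le a$; by the spectral inclusion for $C_0$-semigroups the spectrum of the part of $\scrL$ in $R(I-\Pi)$ is contained in $\overline{\Delta_a}^{\,c}=\{\mathrm{Re}\,z\le a\}$, hence disjoint from $\Delta_a$. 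Because $L^1_k=R(\Pi)\oplus R(I-\Pi)$ reduces $\scrL$, we get $\sigma_k(\scrL)=\{\lambda_0,\dots,\lambda_{m_a}\}\cup\sigma_k(\scrL|_{R(I-\Pi)})$, and intersecting with $\Delta_a$ kills the second piece. (A mild technical point, which I would address by a short Laplace-transform/resolvent estimate, is that the exponential bound on $S(t)(I-\Pi)$ gives $\mathrm{Re}\,z\le a$ rather than $<a$; but then one applies the theorem with a slightly larger parameter $a'\in(a,\lambda_{m_a})$, which is legitimate since $k>k_a\ge k_{a'}$ and $m_{a'}=m_a$ as long as $a'<\lambda_{m_a}$, yielding strict separation and the stated equality.)

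For the algebraic simplicity, fix $m\le m_a$ and pick $a'\in(\lambda_{m+1},\lambda_m)$ with $k>k_{a'}$; then $m_{a'}\le m$, so by the previous paragraph $\Delta_{a'}$ contains only finitely many spectral points of $\scrL$, each an isolated point of the spectrum, and in particular $\lambda_m$ is an isolated point of $\sigma_k(\scrL)$. Consider the Riesz spectral projection $P_m=\frac{1}{2\pi i}\oint_{\Gamma_m}(z-\scrL)^{-1}\,dz$ on a small circle $\Gamma_m$ around $\lambda_m$; its range is the algebraic eigenspace $\bigcup_j N((\lambda_m-\scrL)^j)$ and $\scrL|_{R(P_m)}$ has spectrum $\{\lambda_m\}$. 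It remains to show $R(P_m)=\Span(f_m)$, i.e. that $P_m$ has rank one. This follows from the decomposition produced by Theorem~\ref{thm:main}: writing $\Pi=\sum_{n=0}^{m_a}\Pi_n$ with $\Pi_n\cdot=\langle\phi_n,\cdot\rangle f_n$, each $\Pi_n$ is a rank-one projection onto $\Span(f_n)$ commuting with $\scrL$, and since $S(t)(I-\Pi)$ has growth bound $\le a<\lambda_m$ while $S(t)\Pi_n=\e^{\lambda_n t}\Pi_n$, a contour-integral computation identifies $P_m$ with $\Pi_m$ (the resolvent restricted to the relevant subspace is the scalar $(z-\lambda_m)^{-1}$ times $\Pi_m$ plus a piece holomorphic at $\lambda_m$). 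Hence $R(P_m)=R(\Pi_m)=\Span(f_m)$, which is precisely $N(\lambda_m-\scrL)$, and algebraic simplicity follows.

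The main obstacle, and the step requiring the most care, is the passage from the uniform estimate \eqref{eq:higher-order} to genuine spectral information: one must know that $\scrL$ with domain $D_k(\scrL)$ generates a $C_0$-semigroup on $L^1_k$ for the relevant $k$ (so that growth-bound/spectral-inclusion theorems apply), that the operator $\Pi$ built from the $\phi_m$ is exactly the spectral projection associated with $\{\lambda_0,\dots,\lambda_{m_a}\}$ — in particular that the dual eigenfunctions are normalized so that $\langle\phi_m,f_n\rangle=\delta_{mn}$ and that $\Pi$ is a projection — and that the decomposition $L^1_k=R(\Pi)\oplus R(I-\Pi)$ reduces the generator. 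Once these structural facts are in place, the separation of spectrum into the explicit finite set plus a remainder in $\{\mathrm{Re}\,z\le a\}$, and the rank-one computation for each $P_m$, are routine applications of Riesz projection theory.
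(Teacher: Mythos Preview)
Your proposal is correct and is precisely the ``classical'' argument the paper alludes to without spelling out: the paper gives no explicit proof of the Corollary, merely noting it is a classical consequence of Theorem~\ref{thm:main}, and your route via the reducing projection $\Pi=\sum_{m}\langle\phi_m,\cdot\rangle f_m$, the growth bound on $R(I-\Pi)$, and the identification of the Riesz projections $P_m$ with the rank-one $\Pi_m$ is exactly that standard argument. The ingredients you flag as needing care (generation of the semigroup on $L^1_k$, the biorthogonality $\langle\phi_m,f_n\rangle=\delta_{mn}$) are supplied in the paper itself (Section~\ref{sec:eigenfunctions} and the beginning of the proof of Theorem~\ref{thm:main}), so nothing is missing.
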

 
 \
 
 Restricting~\eqref{eq:higher-order} to the first order, Theorem~\ref{thm:main} allows quantifying the Malthusian asymptotic behaviour
 \begin{equation}\label{eq:malthus}
 u(t,\cdot)\sim\langle\phi_0,u_0\rangle\, \e^{bt}f_0\qquad\text{as}\ t\to+\infty.
 \end{equation}
 More precisely, by~\eqref{eq:malthus} we mean that
 \begin{equation}\label{eq:malt-resc}
\e^{-bt} u(t,\cdot)\xrightarrow[t\to+\infty]{}\langle\phi_0,u_0\rangle\,f_0.
 \end{equation}
 Because of~\eqref{eq:N}, we see that if~\eqref{eq:malt-resc} holds, and if $f_0$ is normalized by $\|f_0\|_{L^1}=1$, then necessarily $\phi_0=\mathbf 1$, the constant function equal to $1$.
 A step further consists in establishing that the convergence~\eqref{eq:malt-resc} occurs exponentially fast, in the sense that
 \begin{equation}\label{eq:first-order}
 \big\| \e^{-bt}u(t,\cdot)-\langle \phi_0,u_0\rangle\,f_0\big\|_{L^1_k}\leq C\e^{-\omega t}\big\| u_0-\langle \phi_0,u_0\rangle f_0\big\|_{L^1_{k'}}
 \end{equation}
 for some constants $C,\omega>0$ and some exponents $k'\geq k\geq0$.
 Note that the existence of $f_1$, which is a steady state of the equation since $\lambda_1=0$, implies that necessarily $\omega\leq b$.
 Let us review the existing results in the literature about this convergence problem:
 \begin{enumerate}[label=(\roman*),leftmargin=*,itemsep=1mm]
 \item In~\cite{Michel2005}, the convergence~\eqref{eq:malt-resc} is proved to hold in~$L^1_0=L^1$ for any $u_0\in L^1$ as a consequence of the general relative entropy principle.
 This method does not provide any information on the speed of convergence.
 \item A direct consequence of~\cite[Theorem~1.1]{PerthameRyzhik} is that~\eqref{eq:first-order} holds true with $k=0$, $k'=1$, $\omega=b$ and $C=\max(1,6b)$.
 This result was then slightly improved in~\cite[Proposition~5.4]{Bardet2013} which allows taking $C=\max(1,2b)$, still with $k=0$, $k'=1$ and $\omega=b$.
 \item In~\cite[Proposition~6.5]{Mischler2016}, it is proved that for any $k>1+\frac{\log3}{\log2}$, the estimate~\eqref{eq:first-order} holds true with $k'=k$, any $\omega<(1-3\times2^{1-k})b$, and some $C=C(\omega)>0$.
 \item A negative result is provided in~\cite[Theorem~6.1]{Bernard2017}, where it is proved that for $k'=k>0$ and $\omega>(2\e k\log2)b$ there does not exist $C>0$ such that~\eqref{eq:first-order} holds.
 \end{enumerate}
 Theorem~\ref{thm:main} with $a=0$ improves the point (iii) since it ensures that~\eqref{eq:first-order} holds true with $k=k'>1$ and $\omega=b$.

 \begin{corollary}
 For any $k>1$ there exists $C>0$ such that
 \[\big\| \e^{-bt}u(t,\cdot)-\langle \phi_0,u_0\rangle\,f_0\big\|_{L^1_k}\leq C\e^{-b t}\big\| u_0-\langle \phi_0,u_0\rangle f_0\big\|_{L^1_{k}}\]
 for any $u_0\in L^1_k$ and all $t\geq0$.
 \end{corollary}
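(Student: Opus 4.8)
The plan is to specialize Theorem~\ref{thm:main} to the value $a=0$ and then merely rearrange, so the corollary carries essentially no independent content. First I would check that $0\in(-b,b)$ so that the theorem applies, and compute the threshold exponent
\[
k_0=\frac{\log(2b)-\log(b+0)}{\log2}=\frac{\log2}{\log2}=1 ,
\]
whence $\max(1,k_0)=1$ and the hypothesis $k>\max(1,k_a)$ of Theorem~\ref{thm:main} becomes exactly $k>1$. Next I would pin down $m_0$: since $\lambda_1=(2^{0}-1)b=0$ and $\lambda_0=(2^{1}-1)b=b$, one has $\lambda_1\le 0<\lambda_0$, hence $m_0=\lceil k_0-1\rceil=\lceil 0\rceil=0$, so the sum $\sum_{m=0}^{m_0}$ in~\eqref{eq:higher-order} collapses to the single term $m=0$.

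With these values inserted, \eqref{eq:higher-order} reads
\[
\big\| u(t,\cdot)-\langle \phi_0,u_0\rangle\,\e^{\lambda_0 t}f_0\big\|_{L^1_k}\le C\,\big\| u_0-\langle \phi_0,u_0\rangle f_0\big\|_{L^1_k}
\]
for all $t\ge0$ and all $u_0\in L^1_k$, where $\lambda_0=b$. Multiplying both sides by the positive scalar $\e^{-bt}$ and using homogeneity of the norm, $\e^{-bt}\|v\|_{L^1_k}=\|\e^{-bt}v\|_{L^1_k}$, the left-hand side becomes $\big\|\e^{-bt}u(t,\cdot)-\langle\phi_0,u_0\rangle f_0\big\|_{L^1_k}$ and the right-hand side becomes $C\e^{-bt}\big\|u_0-\langle\phi_0,u_0\rangle f_0\big\|_{L^1_k}$, which is the asserted inequality.

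I do not expect a genuine obstacle: once Theorem~\ref{thm:main} is in hand, the corollary is a one-line arithmetic specialization. The only points worth a sentence are the evaluation $k_0=1$, which is what makes the range $k>1$ the natural one within this method, and the observation that the dual eigenfunction $\phi_0$ in the statement is the one produced by the theorem; if $f_0$ is normalized by $\|f_0\|_{L^1}=1$ then, as noted after~\eqref{eq:malt-resc}, conservation of mass~\eqref{eq:N} forces $\phi_0=\mathbf 1$, so that $\langle\phi_0,u_0\rangle=\int_0^\infty u_0(x)\,dx$ and the estimate is fully explicit.
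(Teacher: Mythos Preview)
Your argument is correct and is exactly the specialization the paper intends: the corollary is stated right after noting that Theorem~\ref{thm:main} with $a=0$ gives~\eqref{eq:first-order} with $k=k'>1$ and $\omega=b$, and your computations $k_0=1$, $m_0=0$ make this explicit.
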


 \section{Primal and dual eigenfunctions}\label{sec:eigenfunctions}
 
 We start by recalling the existence result of solutions to the eigenvalue problem $\scrL f=\lambda f$ taken from~\cite{vBVH2,vBVH1}
 before investigating its dual counterpart $\scrL^*\phi=\lambda\phi$, where $\scrL^*$ is the formal adjoint operator of $\scrL$ given by
 \[\scrL^*\phi(x)=g\phi'(x)-b\phi(x)+2b\phi(x/2).\]
 
 \begin{proposition}[\cite{vBVH2,vBVH1}]
 For any nonnegative integer $m$, the function
 \[f_m(x)=\e^{-\frac bg 2^{1-m}x}+\sum_{n=1}^\infty\frac{(-1)^n 2^{n(m+1)}\e^{-\frac bg 2^{n+1-m}x}}{\prod_{j=1}^n(2^j-1)}\]
 satisfies $\scrL f_m=\lambda_m f_m$ where $\lambda_m=(2^{1-m}-1)b$.
 Moreover, $f_m\in L^1_\infty=\bigcap_{k\geq0}L^1_k$ and for any nonnegative integer $n$ we have
 \begin{equation}\label{eq:moments}
 \int_0^\infty x^nf_m(x)\,dx\neq0\quad\Longleftrightarrow\quad n\geq m.
 \end{equation}
 \end{proposition}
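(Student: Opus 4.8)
The plan is to verify directly that the given series satisfies the functional differential equation, then establish the moment properties. First I would check convergence: the coefficient of $\e^{-\frac bg 2^{n+1-m}x}$ is $c_n = (-1)^n 2^{n(m+1)}/\prod_{j=1}^n(2^j-1)$, and since $\prod_{j=1}^n(2^j-1)$ grows like $2^{n(n+1)/2}$ up to a constant (the $q$-Pochhammer $(1/2;1/2)_\infty$ converges), the ratio $|c_{n+1}/c_n| = 2^{m+1}/(2^{n+1}-1) \to 0$, so the series converges absolutely and uniformly on $[0,\infty)$ — in fact it defines an entire function of $\e^{-\frac{b}{g}x}$ when $m$ is fixed, and termwise differentiation in $x$ is justified on $[\delta,\infty)$ for any $\delta>0$. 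Membership in $L^1_\infty$ then follows because each term is bounded by $|c_n|\e^{-\frac bg 2^{n+1-m}x}$, and $\sum_n |c_n| \int_0^\infty (1+x)^k \e^{-\frac bg 2^{n+1-m}x}\,dx \lesssim \sum_n |c_n|$ (using $2^{n+1-m}\geq 2^{1-m}$, a fixed positive constant, so the exponential decay rate is bounded below), which converges.

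Next I would plug $f_m$ into $\scrL f_m = -g f_m' - b f_m + 4b f_m(2\cdot)$ and match the coefficients of each exponential mode. Write $f_m(x) = \sum_{n\geq 0} c_n \e^{-\mu_n x}$ with $\mu_n = \frac bg 2^{n+1-m}$ and $c_0=1$; note $\mu_n = 2\mu_{n-1}$, so $f_m(2x) = \sum_{n\geq 0} c_n \e^{-2\mu_n x} = \sum_{n\geq 1} c_{n-1}\e^{-\mu_n x}$ — the doubling shifts the index by one. Then
\[
\scrL f_m(x) = \sum_{n\geq 0}(g\mu_n - b)c_n \e^{-\mu_n x} + 4b\sum_{n\geq 1}c_{n-1}\e^{-\mu_n x}.
\]
For this to equal $\lambda_m f_m(x) = \sum_{n\geq0}\lambda_m c_n\e^{-\mu_n x}$ we need, at $n=0$: $g\mu_0 - b = \lambda_m$, i.e. $2^{1-m}b - b = (2^{1-m}-1)b = \lambda_m$, which holds; and at $n\geq 1$: $(g\mu_n - b - \lambda_m)c_n + 4b\,c_{n-1} = 0$, i.e. $(2^{n+1-m}b - 2^{1-m}b)c_n = -4b\,c_{n-1}$, which gives $c_n = -\frac{4\,c_{n-1}}{2^{1-m}(2^n-1)} = -\frac{2^{n+1-m+1-... }}{}$ — concretely $c_n/c_{n-1} = -2^{m+1}/(2^n-1)$, and iterating from $c_0=1$ yields exactly $c_n = (-1)^n 2^{n(m+1)}/\prod_{j=1}^n(2^j-1)$. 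This recovers the stated formula, so the recursion is consistent and $\scrL f_m=\lambda_m f_m$; since all manipulations are of absolutely convergent series the rearrangement is legitimate.

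Finally, for the moment characterization \eqref{eq:moments}, I would compute $I_n^{(m)} := \int_0^\infty x^n f_m(x)\,dx = \sum_{\ell\geq0} c_\ell \frac{n!}{\mu_\ell^{n+1}}$ and extract a closed form. Using $\mu_\ell = \frac bg 2^{\ell+1-m}$, one gets $I_n^{(m)} = n!\,(g/b)^{n+1} 2^{(m-1)(n+1)}\sum_{\ell\geq0} c_\ell\, 2^{-\ell(n+1)}$. So the sign/vanishing of $I_n^{(m)}$ is governed by $S_n^{(m)} := \sum_{\ell\geq0} c_\ell\, 2^{-\ell(n+1)}$; substituting the formula for $c_\ell$, $S_n^{(m)} = \sum_{\ell\geq0} \frac{(-1)^\ell 2^{\ell(m+1)}2^{-\ell(n+1)}}{\prod_{j=1}^\ell(2^j-1)} = \sum_{\ell\geq0}\frac{(-1)^\ell (2^{m-n})^\ell}{\prod_{j=1}^\ell(2^j-1)}$. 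This is an evaluation of a $q$-exponential–type series $E(z) = \sum_{\ell\geq0}\frac{(-z)^\ell}{\prod_{j=1}^\ell(2^j-1)}$ at $z = 2^{m-n}$. The key algebraic identity — and this is the crux — is the functional equation $E(z) = E(z/2) - z\,E(z/2)\cdot(\text{something})$; more precisely, from $\prod_{j=1}^\ell(2^j-1) = (2^\ell-1)\prod_{j=1}^{\ell-1}(2^j-1)$ one derives a relation of the form $E(z) - E(z/2) = \tfrac{z}{?}E(\cdot)$ that, evaluated at the dyadic points $z=2^{m-n}$, forces $S_n^{(m)}=0$ precisely when $n<m$ by a finite descent (each halving step relates $S_n^{(m)}$ to $S_{n-1}^{(m)}$ and the relevant coefficient vanishes exactly at the threshold), while for $n\geq m$ one checks $S_n^{(m)}\neq 0$ by noting the series is an alternating series whose terms are eventually strictly decreasing, or by identifying $E$ with a known non-vanishing entire function on the relevant rays. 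The main obstacle I anticipate is pinning down this $q$-series identity cleanly: getting the right telescoping/functional relation for $E$ so that the "$n<m \Rightarrow$ vanishing" half is transparent, rather than a brute-force induction on $n$; I would look for it in the theory of $q$-series (Euler's $q$-exponential) or reconstruct it from the eigenvalue recursion itself, since the $c_n$ already satisfy a one-step recursion that should translate into the desired relation among the $S_n^{(m)}$.
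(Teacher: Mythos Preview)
The paper does not prove this proposition; it simply quotes it from \cite{vBVH2,vBVH1}. So there is no proof in the paper to compare against, and I will just assess your argument on its own.

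Your verification of $\scrL f_m=\lambda_m f_m$ is correct and complete: the coefficient matching forces exactly the recursion $c_n/c_{n-1}=-2^{m+1}/(2^n-1)$, and absolute convergence of $\sum|c_n|$ justifies the rearrangements. The $L^1_\infty$ membership is also fine, since every exponential rate $\mu_n$ is bounded below by $\mu_0=(b/g)2^{1-m}>0$, so each weighted integral is dominated by a fixed constant times $|c_n|$.

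The only genuine gap is the moment identity \eqref{eq:moments}. You correctly reduce it to showing that
\[
E(z):=\sum_{\ell\ge 0}\frac{(-z)^\ell}{\prod_{j=1}^\ell(2^j-1)}
\]
satisfies $E(2^{m-n})=0\iff n<m$, but then you only sketch a plan (``this is the crux\ldots I would look for it''). The clean way to close this is to check directly from the series, by a one-step index shift entirely analogous to the one you used in the eigenvalue verification, that
\[
E(2z)=(1-z)\,E(z).
\]
Together with $E(0)=1$ this gives the product formula $E(z)=\prod_{k\ge1}(1-z/2^k)$, which is precisely Euler's $q$-exponential identity with $q=1/2$. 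From the product it is immediate that $E(2^{m-n})=0$ exactly when some factor $1-2^{m-n-k}$ vanishes for a $k\ge1$, i.e.\ when $m-n\ge1$; and for $n\ge m$ every factor lies in $(0,1)$, so $E(2^{m-n})>0$. Your alternating-series idea for the nonvanishing when $n\ge m$ can be made rigorous, but it needs a small case split at $n=m$ (the first two terms cancel there) and in any case does not address the vanishing for $n<m$; the functional equation handles both directions at once.
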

 
 In a similar but much simpler way, we can find dual eigenfunctions $\phi_m$ associated to $\lambda_m$ in the form of polynomials of degree $m$.
 
 \begin{lemma}
 Let $m$ be a nonnegative integer.
 If $m=0$, then any constant function $\phi_0$ verifies $\scrL^*\phi_0=\lambda_0\phi_0$.
 If $m>0$, then
 %for any $\alpha_{0,m}\in\R$
 the function
 \[\phi_m(x)=\sum_{n=0}^m\alpha_{n,m}x^n\]
verifies $\scrL^*\phi_m=\lambda_m\phi_m$ if and only if
 \[\alpha_{n+1,m}=\frac{b}{g}\frac{2^{1-m}(1-2^{m-n})}{n+1}\alpha_{n,m}\]
 for any $0\leq n\leq m-1$.
 \end{lemma}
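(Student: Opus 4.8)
The plan is to substitute the polynomial ansatz directly into the equation $\scrL^*\phi_m=\lambda_m\phi_m$ and to match the coefficients of the monomials $x^n$ on both sides. The case $m=0$ requires no computation: if $\phi_0\equiv c$ is constant then $\scrL^*\phi_0=0-bc+2bc=bc$, which equals $\lambda_0\phi_0$ because $\lambda_0=(2^{1-0}-1)b=b$.

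For $m>0$, writing $\phi_m(x)=\sum_{n=0}^m\alpha_{n,m}x^n$ and reindexing the shifted sums, I would use
\[
\phi_m'(x)=\sum_{n=0}^{m-1}(n+1)\alpha_{n+1,m}x^n,\qquad \phi_m(x/2)=\sum_{n=0}^m 2^{-n}\alpha_{n,m}x^n,
\]
to obtain
\[
\scrL^*\phi_m(x)=\sum_{n=0}^{m-1}g(n+1)\alpha_{n+1,m}x^n+\sum_{n=0}^{m}\big(2^{1-n}-1\big)b\,\alpha_{n,m}x^n.
\]
Since a polynomial vanishes if and only if each of its coefficients vanishes, the identity $\scrL^*\phi_m=\lambda_m\phi_m$ is equivalent to the finite system obtained by equating the coefficient of $x^n$ for $0\le n\le m$. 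The coefficient of $x^m$ yields $(2^{1-m}-1)b\,\alpha_{m,m}=\lambda_m\alpha_{m,m}$, which holds automatically by the definition of $\lambda_m$; for $0\le n\le m-1$ the coefficient of $x^n$ gives $g(n+1)\alpha_{n+1,m}=\big(\lambda_m+b-2^{1-n}b\big)\alpha_{n,m}$, and since $\lambda_m+b=2^{1-m}b$ the bracket simplifies to $b\,2^{1-m}\big(1-2^{m-n}\big)$, which is precisely the claimed recurrence. This proves both directions of the equivalence at once.

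The only point that deserves attention is the highest-degree equation: one must observe that the $x^m$-coefficient identity is not an additional constraint but is satisfied identically thanks to $\lambda_m=(2^{1-m}-1)b$, which is exactly why degree $m$ is the consistent choice and why the recurrence alone characterizes $\phi_m$ among polynomials of degree $m$. Apart from this small observation, the argument is routine bookkeeping of indices in the shifted sums; as a byproduct one sees that $1-2^{m-n}\neq0$ for $0\le n\le m-1$, so that any choice $\alpha_{0,m}\neq0$ produces a polynomial $\phi_m$ of exact degree $m$.
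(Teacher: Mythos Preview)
Your proof is correct and is exactly the straightforward coefficient-matching computation the paper has in mind; indeed the paper does not spell out a proof but simply states that it ``is straightforward computations left to the reader.'' Your verification of the $m=0$ case, the reindexing of the derivative sum, and the simplification $\lambda_m+b-2^{1-n}b=b\,2^{1-m}(1-2^{m-n})$ are all correct, and your remark that the $x^m$-coefficient equation is automatically satisfied is the right observation.
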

 
 The proof of this lemma is straightforward computations left to the reader.
 For any choice of $\alpha_{0,m}\neq0$, we thus have an eigenfunction $\phi_m$ of $\scrL^*$ for the eigenvalue $\lambda_m$.
 The property~\eqref{eq:moments} readily ensures that such a dual eigenfunction satisfies
 \[\langle\phi_m,f_m\rangle=\alpha_{m,m}\langle x^m,f_m\rangle\neq0.\]
 It is therefore possible, and we will make this choice from now on, to choose $\alpha_{0,m}$ such that
 \[\langle\phi_m,f_m\rangle=1.\]
 Since for $n\neq m$ we have 
 \[\lambda_n\langle\phi_n,f_m\rangle=\langle\scrL^*\phi_n,f_m\rangle=\langle\phi_n,\scrL f_m\rangle=\lambda_m\langle\phi_n,f_m\rangle\]
 and $\lambda_n\neq\lambda_m$, we deduce that
 \begin{equation}\label{eq:orthonorm}
 \langle\phi_n,f_m\rangle=\delta_{n,m}
 \end{equation}
 where $\delta_{n,m}$ is the Kronecker delta.

 \section{Proof of the main result}
 
 It is a known result, see for instance~\cite{Bernard2017}, that $\scrL$ generates a positive strongly continuous semigroup $(S_t)_{t\geq0}$ in $L^1_k$ for any $k\geq0$.
 This means that, for any $u_0\in L^1_k$, Equation~\eqref{eq:mitosis} admits a unique solution given by
 \[u(t,\cdot)=S_t u_0.\]
In particular we have $S_tf_m=\e^{\lambda_m t}f_m$ and $S^*_t\phi_m=\e^{\lambda_m t}\phi_m$ for all $m$, where $(S^*_t)$ is the dual semigroup of $(S_t)$ in $(L^1_k)'=L^\infty_k$.
 The proof of Theorem~\ref{thm:main} relies on a Weyl theorem which is proved in~\cite{Mischler2016} to hold for the semigroup $(S_t)$.
 
 \begin{proof}[Proof of Theorem~\ref{thm:main}]
 It is proved in~\cite[Proposition~4.4]{Mischler2016} that for any $k>1$ and any $a>(2^{1-k}-1)b$, there exist a nonnegative integer $M$,
 a finite family of distinct complex numbers $\xi_0,\cdots,\xi_M\in\overline{\Delta_a}$, some finite rank projectors $\Pi_0,\cdots,\Pi_M$ and some nonnegative integers $j_0,\cdots,j_M$ such that
 \begin{equation}\label{eq:Weyl}
 \bigg\| S_t - \sum_{m=0}^M \e^{\xi_m t} \sum_{j=0}^{j_m}\frac{t^j}{j!}(\scrL-\xi_m)^j\Pi_m \bigg\|\leq C_a\,\e^{at}
 \end{equation}
 for some $C_a>0$ and all $t\geq0$.
 The pointwise expansion~\eqref{eq:expansion} then imposes $M=m_a$, $\{\xi_0,\cdots,\xi_M\}=\{\lambda_0,\cdots,\lambda_{m_a}\}$,
 $j_m=0$ for all $m$, and the range of $\Pi_m$ is one dimensional, given by $R\Pi_m=\Span(f_m)$.
 This implies that $\Pi_m=\psi_m\otimes f_m$ for some $\psi_m\in(L^1_k)'=L^\infty_k$.
 In other words we have for any $u_0\in L^1_k$ and all $t\geq0$
 \begin{equation}\label{eq:higher-order-bis}
 \bigg\| S_tu_0 - \sum_{m=0}^{m_a} \e^{\lambda_m t} \langle\psi_m,u_0\rangle f_m \bigg\|_{L^1_k}\leq C_a\,\e^{at}{\|u_0\|}_{L^1_k}.
 \end{equation}
Testing the term inside the norm of the left hand side against~$\phi_\ell$ for some $\ell\in\{0,\cdots,m_a\}$, using~\eqref{eq:orthonorm} and dividing by $ \e^{\lambda_\ell t} $, we get that
\[|\langle\phi_\ell,u_0\rangle - \langle\psi_\ell,u_0\rangle| \leq C_a\,\e^{(a-\lambda_\ell) t}{\|\phi_\ell\|}_{L^\infty_k}{\|u_0\|}_{L^1_k}\xrightarrow[t\to+\infty]{}0\]
for any $u_0\in L^1_k$, since $a<\lambda_\ell$ for $\ell\leq m_a$.
This implies that $\psi_\ell=\phi_\ell$ and the proof is complete, \eqref{eq:higher-order} being obtained by applying~\eqref{eq:higher-order-bis} to the initial data
$u_0-\sum_{m=0}^{m_a}\langle \phi_m,u_0\rangle f_m$, using again~\eqref{eq:orthonorm}.
 \end{proof}

 \section{Conclusion and perspectives}
 
 We have proved that for any $a>b$, the part of spectrum of $\scrL$ in $L^1_k$ with real part larger than $a$ reduces to a finite number of explicit algebraically simple eigenvalues with explicit associated eigenfunctions,
 provided that $k$ is large enough (larger than an explicit bound).
 
 \medskip 

 The existence of explicit higher eigenfunctions $f_m$ has also been obtained in the literature for more general versions of Equation~\eqref{eq:mitosis},
 namely with a monomial size-dependent division rate
\begin{equation}\label{eq:monomial}
 \partial_tu(t,x)+g\,\partial_xu(t,x)+bx^nu(t,x)=2^{n+2}bx^nu(t,2x)
 \end{equation}
in~\cite{vBVH1}, or with non-symmetric division
\begin{equation}\label{eq:asymmetric}
 \partial_tu(t,x)+g\,\partial_xu(t,x)+b\,u(t,x)=\alpha b\,u(t,\alpha x) + \beta b\,u(t,\beta x) 
 \end{equation}
where $1/\alpha+1/\beta=1$ in~\cite{Zaidi2015a}.
Extending Theorem~\ref{thm:main} to these more general model is an interesting open problem.
It would require deriving the asymptotic expansion~\eqref{eq:expansion}, which is crucial in the proof, to the solutions of~\eqref{eq:monomial} and/or~\eqref{eq:asymmetric},
or finding another way to identify $\xi_m,j_m$ and $\Pi_m$ in~\eqref{eq:Weyl}.
This is left for future work.

\

\paragraph{\bf Acknowledgments.}
PG has been supported by the ANR project NOLO (ANR-20-CE40-0015), funded by the French Ministry of Research.

This work was partially funded by the European Union (ERC, SINGER, 101054787). Views and opinions expressed are however those of the author(s) only and do not necessarily reflect those of the European Union or the European Research Council. Neither the European Union nor the granting authority can be held responsible for them.

%%%%%%%%%%%%%%%%%%%%%%%%%%%%%%%%%%%%
%%%%%%%%%%%%%%%%%%%%%%%%%%%%%%%%%%%%
%
%%%%%% BIBLIO %%%%%%%%%%%%%%%%%%%%%%
%
%%%%%%%%%%%%%%%%%%%%%%%%%%%%%%%%%%%%

%\bibliographystyle{abbrv}
%\bibliography{biblio}

\end{document}